\documentclass{amsart}

\usepackage{tikz-cd}
\usepackage{hyperref}
\usepackage{amssymb}
\usepackage{comment}
\usepackage{adjustbox}

\usepackage[style=alphabetic]{biblatex}
\newtheorem{thm}{Theorem}[section]
\newtheorem{prop}[thm]{Proposition}
\newtheorem*{prop*}{Proposition}
\newtheorem{cor}[thm]{Corollary}
\newtheorem{lem}[thm]{Lemma}

\theoremstyle{definition}
\newtheorem{defi}[thm]{Definition}
\newtheorem{ex}[thm]{Example}

\newtheorem{rem}[thm]{Remark}

\newtheorem*{nota}{Notation}
\newtheorem*{ack}{Acknowledgements}

\renewcommand{\H}{\text{\normalfont H}}
\renewcommand{\P}{\mathcal P}
\renewcommand{\S}{\mathcal S}
\newcommand{\m}{\mathfrak m}
\newcommand{\p}{\mathfrak p}
\newcommand{\q}{\mathfrak q}

\newcommand{\quotient}[2]{{^{\displaystyle #1}}\Big/{_{\displaystyle #2}}}

\newcommand{\Spec}{\text{\normalfont Spec}\,}
\newcommand{\Ann}{\text{\normalfont Ann}}

\title{A curious characterisation of Dedekind domains}
\author{Robert Szafarczyk}
\date{}

\begin{document}

\begin{abstract}
We characterise Dedekind rings among not necessarily Noetherian domains by a property of their module homomorphisms. Our proof relies on a homological algebra argument.
\end{abstract}

\maketitle

\tableofcontents

\section{Introduction}

In commutative algebra it is often the case that properties of rings imply properties of modules. One of the most important results in the field, the \emph{structure theorem for finitely generated modules over principal ideal domains}, is an example of this. It is furthermore very useful when the converse holds as well. This is not the case for PIDs, however, because the structure theorem is true for a bigger class of rings, the study of which was first undertaken by Kaplansky \cite{Kaplansky} and, after contributions by many authors, culminated with a full description due to Brandal and Wiegand \cite{Brandal}. Other results characterising various rings in terms of their modules appeared in the work of Osofsky (e.g. \cite{Osofsky}), Van Huynh (e.g. \cite{VanHuynh}), and Couchot (e.g. \cite{Couchot1}, \cite{Couchot3}).

In this paper, we give a module-theoretic characterisation of Dedekind domains.

\begin{thm}
Let \(R\) be an integral domain. We say that an \(R\)-module homomorphism \(f:M\to N\) is seemingly divisible by \(r\in R\) if for every \(m\in M\) we have \(f(m)=rn\) for some \(n\in N\) and \(f(m)=0\) whenever \(rm=0\). The following statements are then equivalent:
\begin{enumerate}
\item For all \(r\in R\) and \(f:M\to N\), if \(f\) is seemingly divisible by \(r\), then \(f=r\cdot g\) in \(\text{\normalfont Hom}_R(M,N)\).
\item \(R\) is a Dedekind domain.
\end{enumerate}
\end{thm}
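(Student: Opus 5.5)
Throughout, write \(M[r]=\{m\in M: rm=0\}\). The plan is to recast ``seemingly divisible'' homologically. If \(r=0\), seemingly divisible maps are zero, so assume \(r\neq0\). Let \(\mu\colon M\to rM\) be \(m\mapsto rm\). This is surjective with kernel \(M[r]\).

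A map \(f\) is seemingly divisible by \(r\) exactly when \(f=h\circ\mu\) for some \(h\colon rM\to rN\subseteq N\). Now look at the short exact sequence \(0\to N[r]\to N\xrightarrow{\,r\,} rN\to0\). Since \(f\) has image in \(rN\), asking for \(g\) with \(rg=f\) is asking that \(f\in\text{\normalfont Hom}_R(M,rN)\) lift along \(N\to rN\). So the obstruction is the connecting class \(\delta(f)\in \mathrm{Ext}^1_R(M,N[r])\). By naturality, \(\delta(f)=\mu^*\delta'(h)\) with \(\delta'(h)\in\mathrm{Ext}^1_R(rM,N[r])\). Hence condition (1) says that \(\mu^*\) kills the image of \(\delta'\colon\text{\normalfont Hom}(rM,rN)\to\mathrm{Ext}^1(rM,N[r])\), for all \(M,N,r\).

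For (2)\(\Rightarrow\)(1), I would use that a Dedekind domain is hereditary, so \(\mathrm{Ext}^2_R=0\). Write \(\iota\colon rM\hookrightarrow M\), so that \(\iota\circ\mu\) is multiplication by \(r\) on \(M\). Because \(\mathrm{Ext}^2(M/rM,-)=0\), the restriction \(\iota^*\colon\mathrm{Ext}^1(M,N[r])\to\mathrm{Ext}^1(rM,N[r])\) is surjective. So \(\delta'(h)=\iota^*e\) for some \(e\), and then
\[\delta(f)=\mu^*\iota^*e=(\iota\mu)^*e=r\cdot e=0,\]
since \(r\) annihilates \(N[r]\) and hence annihilates \(\mathrm{Ext}^1(M,N[r])\). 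Thus \(f\) lifts to some \(g\) with \(f=rg\). This direction looks clean.

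For (1)\(\Rightarrow\)(2), I would use that a domain is Dedekind iff it is hereditary. Indeed, projective ideals are invertible, hence finitely generated, so the domain is Noetherian and no Noetherian hypothesis is needed. It therefore suffices to show that quotients of injectives are injective, or that \(\mathrm{Ext}^2\) vanishes.

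I would specialise \(N=E\) injective, so that \(E\) is divisible and \(rE=E\). Then \(\delta'\colon \text{\normalfont Hom}(rM,E)\to\mathrm{Ext}^1(rM,E[r])\) is surjective, and condition (1) becomes
\[\mu^*=0\colon \mathrm{Ext}^1(rM,E[r])\to\mathrm{Ext}^1(M,E[r]).\]
Torsion-free \(M\) yields nothing, since then \(\mu\) is an isomorphism and the statement is vacuous. The information must come from torsion test modules, for example \(M=R/I\oplus R\) or \(M=R/(rI)\) for ideals \(I\), where \(M[r]\) and \(rM\) are computable.

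From the long exact sequence of \(0\to M[r]\to M\to rM\to0\), vanishing of \(\mu^*\) says that \(\text{\normalfont Hom}(M[r],E[r])\to\mathrm{Ext}^1(rM,E[r])\) is surjective. With a suitable choice of \(M\), I would aim to turn this into \(\mathrm{Ext}^1(I,E/K)=0\) for ideals \(I\) and submodules \(K\subseteq E\). The idea is to realise an arbitrary \(r\)-torsion module as some \(E[r]\). This is the main obstacle: choosing test modules so that the constraint forces every ideal to be projective, equivalently invertible. I would first try \(M=R/r^2I\)-type modules, tracking the maps explicitly to show that \(I\cdot I^{-1}=R\).
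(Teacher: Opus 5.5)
Your proof of (2)\(\Rightarrow\)(1) is correct, and it takes a different, shorter route than the paper. The paper passes to \(\prod_{\p\in V(x)}R_\p\) via Lemma \ref{lem_der}, splits off the \(x\)-torsion summands of \(M\) with Lemma \ref{lem_dec}, and kills the remaining map in \(\mathcal D(R/x)\) with Lemma \ref{lem_nul}. You instead use \(\mathrm{Ext}^2_R=0\) to pull the obstruction class back along \(\iota^*\), and then note that it becomes a multiple of \(r\), which annihilates \(\mathrm{Ext}^1(M,N[r])\). Your argument proves \(\S^R=\P^R\) for every hereditary ring and avoids derived categories. The paper's local decomposition is what handles non-hereditary rings satisfying (1), such as \(k[x]/x^2\).

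The gap is (1)\(\Rightarrow\)(2). You give no argument for it, and the strategy you outline cannot succeed, because specialising to an injective target \(N=E\) extracts no information at all. Condition (1) holds for injective targets over every ring. If \(f\colon M\to E\) vanishes on \(M[r]\), it factors as \(h\circ\mu\) with \(h\colon rM\to E\). Then \(h\) extends to some \(g\colon M\to E\), and \(f(m)=h(rm)=g(rm)=r\,g(m)\).

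So your map \(\mu^*\colon\mathrm{Ext}^1(rM,E[r])\to\mathrm{Ext}^1(M,E[r])\) is always zero, and no test module \(M\), torsion or not, will force hereditariness out of it. Relatedly, \(E[r]=\mathrm{Hom}_R(R/r,E)\) is always an injective \(R/r\)-module, so arbitrary \(r\)-torsion modules cannot be realised as \(E[r]\).

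The content has to come from non-injective targets, and there torsion-free sources are essential, contrary to your remark. In Lemma \ref{lem_tech} the source \(R[\frac1{x_1x_2\cdots}]\) is torsion-free over a domain. That construction is what rules out \(\m=\m^2\), i.e.\ it is where Noetherianity comes from.

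What is missing is the paper's chain of necessary conditions:
\begin{enumerate}
\item (1) forces every \(R_\p\) to be a field or a DVR (Lemma \ref{lem_loc}). This uses Lemma \ref{lem_tech} for \(\m\neq\m^2\), the map \(R/y\to R/\m^2\) given by multiplication by \(t\) for \(\m=(t)\), and Lemma \ref{lem_tech} with the constant sequence \(t,t,\dots\) for \(\bigcap\m^n=0\).
\item (1) forces \(V(x)\) to be discrete, hence finite, for \(x\neq 0\) (Lemma \ref{lem_conv}, via \(R/\p^{k+1}\to R/(I\cap\p^{k+1})\)).
\end{enumerate}
A domain that is locally a DVR and has finite character is Dedekind. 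Your closing plan, to extract \(I\cdot I^{-1}=R\) from modules like \(R/r^2I\), gives no indication of how any of these steps would be obtained.
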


A surprising aspect of our result is that condition (1) forces \(R\) to be Noetherian. This is no longer the case if one removes the domain assumption. A full characterisation of rings satisfying (1) is given in Theorem \ref{thm_main} and examples in Section~\ref{sec_ex}. That \(R\) is Noetherian is neither implied if one restricts (1) to finitely generated modules only. However, this weaker condition would still characterise Dedekind rings among Noetherian domains (see Remark \ref{rem_fin}).

At its core, our proof of (2)\(\implies\)(1) relies on a homological trick, which we present in a simplified case in the following proposition.

\begin{prop} \label{prop_thomas}
A morphism of abelian groups \(f:M\to N\) is divisible by a prime number \(p\in \mathbb Z\) if and only if it is seemingly divisible by \(p\).
\end{prop}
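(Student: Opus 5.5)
The forward direction is immediate: if \(f=p\cdot g\), then \(f(m)=p\,g(m)\) for every \(m\), and if \(pm=0\) then \(f(m)=g(pm)=0\). So all the work is in the converse. The plan is to encode ``seemingly divisible'' as a factorisation through the quotient \(M/M[p]\), where \(M[p]=\{m\in M: pm=0\}\), and then use injectivity of a suitable module to finish.

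Assume \(f\) is seemingly divisible by \(p\). Since \(f\) kills \(M[p]\), it factors through \(M/M[p]\). Multiplication by \(p\) induces an isomorphism \(M/M[p]\cong pM\), so \(f\) determines a homomorphism \(h:pM\to N\) with \(h(pm)=f(m)\). This is well defined because \(pm=pm'\) implies \(m-m'\in M[p]\), hence \(f(m)=f(m')\). We need \(g:M\to N\) with \(p\,g(m)=f(m)\). Let \(\pi:N\to N/pN\). The hypothesis \(f(M)\subseteq pN\) gives \(\pi\circ f=0\).

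Now form the short exact sequence
\[0\to N[p]\to N\xrightarrow{\;p\;} pN\to 0,\]
and regard \(f\) as a map \(M\to pN\). The question becomes whether \(f\) lifts along \(p:N\to pN\). The obstruction is the class of the pullback extension in \(\mathrm{Ext}^1_{\mathbb Z}(M,N[p])\). The homological trick is to show this class vanishes by using that \(f\) factors as \(M\xrightarrow{p} pM\xrightarrow{h} pN\). Consequently, the obstruction for \(f\) equals the obstruction for \(h\), pulled back along \(p:M\to pM\).

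Write \(\xi\in\mathrm{Ext}^1(pM,N[p])\) for the obstruction class of \(h\). The pullback of \(\xi\) along \(M\to pM\), \(m\mapsto pm\), is computed by composing the pullback along the inclusion \(pM\hookrightarrow M\) with multiplication by \(p\). More cleanly, multiplication by \(p\) on \(M\) factors as \(M\to pM\hookrightarrow M\). Because \(\mathrm{Ext}\) is additive, the pullback along the map \(p\) is multiplication by \(p\) on Ext classes. The class in question is therefore \(p\cdot\eta\) for some \(\eta\in\mathrm{Ext}^1(\,\cdot\,,N[p])\), provided we can extend \(\xi\) from \(pM\) to \(M\).

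At this point \(N[p]\) is a \(\mathbb Z/p\)-module, so every Ext group with coefficients in \(N[p]\) is killed by \(p\), and \(p\eta=0\) as required.

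The extension of \(\xi\) is exactly where the argument may fail, and it is the main obstacle. The long exact sequence gives \(\mathrm{Ext}^1(M,N[p])\to\mathrm{Ext}^1(pM,N[p])\to\mathrm{Ext}^2(M/pM,N[p])\), and the last group vanishes over \(\mathbb Z\) because \(\mathbb Z\) is hereditary. So \(\xi\) does extend to some \(\eta\in\mathrm{Ext}^1(M,N[p])\). The pullback of \(\xi\) along \(p:M\to pM\) then equals \(\eta\) pulled back along \(p:M\to M\), which is \(p\eta=0\).

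Hence \(f\) lifts to \(g:M\to N\) with \(p\circ g=f\), that is, \(f=p\cdot g\).

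The step to check carefully is the identification of the obstruction class with the Ext-pullback, and the naturality bookkeeping that turns it into \(p\eta\). Hereditarity of \(\mathbb Z\) is what will later generalise to Dedekind domains.
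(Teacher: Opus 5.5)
Your proof is correct, and it takes a genuinely different route from the paper's.

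\textbf{The paper's route.} It works in $\mathcal D(\mathbb Z)$. Lifting $f$ along $N\xrightarrow{p}N$ is equivalent to the vanishing of $M\to N\to N\otimes^L_{\mathbb Z}\mathbb F_p$, since the target is the cone of $p$. By adjunction this is equivalent to the vanishing of $f\otimes^L_{\mathbb Z}\mathbb F_p$ in $\mathcal D(\mathbb F_p)$. Seeming divisibility says exactly that this map is zero on homology, and over a field that forces it to be zero.

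\textbf{Your route.} You never leave $\mathbb Z$-modules. The obstruction $\delta(f)\in\mathrm{Ext}^1_{\mathbb Z}(M,N[p])$ equals $p_M^*\xi$ for $p_M\colon M\twoheadrightarrow pM$. You kill it using three facts: $\mathrm{Ext}^2_{\mathbb Z}=0$, so restriction along $\iota\colon pM\hookrightarrow M$ is onto on $\mathrm{Ext}^1(-,N[p])$; $\iota\circ p_M=p\cdot\mathrm{id}_M$; and $p\,N[p]=0$.

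\textbf{What each buys.} The paper needs $\mathbb Z/p$ to be a field, whereas you need only that $\mathbb Z$ is hereditary.
\begin{itemize}
\item Your argument never uses primality. It proves the statement for every integer $n$, and more generally $\S^R_r=\P^R_r$ for every $r$ in any commutative ring of global dimension at most $1$.
\item That gives the direction Dedekind $\Rightarrow$ $\S^R=\P^R$ of Corollary~\ref{cor_dom} at once. The paper instead needs Lemmas~\ref{lem_dec} and~\ref{lem_nul}, precisely because $R/x$ is no longer a field.
\item It uses only classical Ext long exact sequences and no derived categories, which bears on the remark following the proposition.
\item The paper's formulation buys Lemma~\ref{lem_der}: divisibility by a non-zero divisor $x$ depends only on $f\otimes^L_R R/x$. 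This lets the paper replace the ring $R_1$ of Theorem~\ref{thm_main}, which need not be hereditary, by $\prod_{\p\in V(x)}R_\p$. Your argument alone does not reach such rings.
\item After that reduction the $R_\p$ are DVRs. From there your argument could finish the proof of (2)$\Rightarrow$(1) in place of Lemmas~\ref{lem_dec} and~\ref{lem_nul}.
\end{itemize}

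\textbf{One expository fix.} Your sentence about \emph{composing the pullback along the inclusion $pM\hookrightarrow M$ with multiplication by $p$} has the direction backwards: $\iota^*$ goes from $\mathrm{Ext}^1(M,-)$ to $\mathrm{Ext}^1(pM,-)$. Just state directly what you then do: choose $\eta$ with $\iota^*\eta=\xi$ and compute $p_M^*\xi=(\iota\circ p_M)^*\eta=p\eta=0$.
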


\begin{proof}
Dividing \(f\) by \(p\) is equivalent to lifting it along \(N\xrightarrow{p} N\). Such a lifting problem can be solved in the derived category \(\mathcal D(\mathbb Z)\) where it is equivalent to showing that the composite \(M\to N\to N\otimes_{\mathbb Z}^L \mathbb F_p\) is zero. This is because \(N\otimes_{\mathbb Z}^L \mathbb F_p\) is isomorphic to the cone of \(N\xrightarrow{p} N\). By adjunction, it then suffices to argue that \(f\otimes_{\mathbb Z}^L\mathbb F_p:M\otimes_{\mathbb Z}^L\mathbb F_p\to N\otimes_{\mathbb Z}^L\mathbb F_p\) is zero in \(\mathcal D(\mathbb F_p)\). Since, by assumption, this map is zero on homology and \(\mathbb F_p\) is a field, it must indeed be zero.
\end{proof}

In special cases, for example when \(M\) is finitely generated, the above statement can readily be proven by elementary means. We are not aware, however, of a full argument not equivalent to the one above that circumvents the use of derived categories.

The proof we gave can be interpreted as obstruction theory. More precisely, to the seemingly divisible map \(f\) we attach an obstruction class in \(\text{Ext}_{\mathbb F_p}^1(M/p,N[p])\) that vanishes if and only if \(f\) is divisible by \(p\). This is not that surprising, because we know a priori that the set of divisors of \(f\) is a torsor over \(\text{Hom}_{\mathbb F_p}(M/p,N[p])\).

\begin{nota}
Let \(x\in R\). We denote by \(\Ann(x)\) the kernel of \(R\xrightarrow{x}R\). We write \(V(x)\) for the closed subset of the Zariski spectrum \(\Spec R\) of those prime ideals containing \(x\) and write \(D(x)\) for its complement. If \(M\) is an \(R\)-module, then we denote by \(M[x]\) its \(x\)-torsion submodule.
\end{nota}

\begin{ack}
The author is grateful to Thomas Nikolaus for showing~him the proof of Proposition \ref{prop_thomas}. Further rumination on it lead the author to this project. The author would also like to thank Robert Burklund for several helpful discussions and Semen Slobodianiuk for useful comments on the article's first draft. The author was supported by Danmarks Grundforskningsfond CPH-GEOTOP-DNRF151.
\end{ack}

\section{The characterisation}

\begin{defi}
Let \(R\) be a ring, \(r\) an element of \(R\), and \(f:M\to N\) a homomorphism of \(R\)-modules. We say that \(f\) is \emph{seemingly divisible by} \(r\) if \(f\left(M[r]\right)=0\) and \(f(M)\subset rN\). We say that \(f\) is divisible by \(r\) if it is divisible by \(r\) in the \(R\)-module \(\text{Hom}_R(M,N)\).

We denote by \(\S^R_r\) the collection of all \(R\)-module morphisms that are seemingly divisible by \(r\) and by \(\P^R_r\) the collection of all morphisms divisible by \(r\). We have \(\P^R_r\subset \S^R_r\). If we write \(\S^R=\P^R\), we mean that \(\S^R_r=\P^R_r\) for every \(r\in R\).
\end{defi}

The goal of this section is to characterise rings \(R\) for which \(\S^R=\P^R\). We begin with a few observations.

\begin{rem} \label{rem_P}
Let \(R\) be a ring for which \(\S^R=\P^R\). Then also \(\S^S=\P^S\) for any quotient or localization \(S\) of \(R\). Indeed, this follows from the fact that, up to units, all elements of \(S\) come from \(R\) and that the forgetful functor from \(S\)-modules to \(R\)-modules is fully faithful.
\end{rem}

\begin{rem} \label{rem_prod}
Let \((r,s)\in R\times S\). Then \(\S^{R\times S}_{(r,s)}=\P^{R\times S}_{(r,s)}\) if and only if both \(\S^R_r=\P^R_r\) and \(\S^S_s=\P^S_s\). Observe as well that \(\S^R_e=\P^R_e\) is always satisfied whenever \(e\) is a unit or \(e=0\), so also when \(e\) is an idempotent.
\end{rem}

The following is our main theorem. We give an algebraic characterisation (\ref{pt_2}) as well as a geometric one (\ref{pt_3}). We will later specialize our result to the case of domains in Corollary \ref{cor_dom}.

Recall that a principal ideal ring is a ring in which all ideals are principal. By \cite[Theorem 1]{pid}, all principal ideal rings are finite products of quotients of PIDs.

\begin{thm} \label{thm_main}
For a ring \(R\) the following statements are equivalent:
\begin{enumerate}
\item \(\S^R=\P^R\).
\item Every localization of \(R\) at a prime ideal is a principal ideal ring and for every element \(x\in R\) the ring \(R\) can be written as a product \(R_0\times R_1\times R_2\), where
\begin{itemize}
\item[\(\circ\)] \(x=(0,x_1,x_2)\in R_0\times R_1 \times R_2\).
\item[\(\circ\)] \(x_1\in R_1\) is a non-zero divisor with discrete \(V(x_1)\).
\item[\(\circ\)] \(R_2\) is a principal ideal ring.
\end{itemize} \label{pt_2}
\item Every localization of \(R\) at a prime ideal is a principal ideal ring, all non-reduced points of \(\Spec R\) are isolated, and whenever \(\p\in\Spec R\) is an accumulation point of a subset \(S\subset \Spec R \backslash\{\p\}\) and \(\q\subset \p\), then \(\q\) is an accumulation point of \(S\) as well. \label{pt_3}
\end{enumerate}
\end{thm}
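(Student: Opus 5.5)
I will prove \((1)\Rightarrow(2)\Rightarrow(1)\) and then \((2)\Leftrightarrow(3)\) separately, the last being point-set topology on \(\Spec R\).

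For \((2)\Rightarrow(1)\), fix \(r\) and \(f\in\S^R_r\), and decompose \(R=R_0\times R_1\times R_2\) with respect to \(x=r\). By Remark \ref{rem_prod} it suffices to treat each factor. On \(R_0\) the element is \(0\), which is trivial.

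On \(R_2\), a principal ideal ring, I write \(R_2\) as a finite product of quotients of PIDs \(A/(a)\), again by Remark \ref{rem_prod}. I then run the argument of Proposition \ref{prop_thomas}. Lifting \(f\) along \(N\xrightarrow{r}N\) is equivalent to the vanishing of \(M\to N\otimes^L_R \mathrm{cofib}(r)\). When \(r\) is a non-zero-divisor, \(\mathrm{cofib}(r)=R/r\). Otherwise I pass to the PID \(A\): there \(r\) lifts to a non-zero-divisor \(\tilde r\), and \(\tilde r\)-divisibility over \(A\) of \(A\)-modules killed by \(a\) gives the result. Over \(A\), the Artinian local pieces \(A/\tilde r\) are principal. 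Since \(\S^R\) is closed under products, I reduce to \(\tilde r=\pi^k\) with \(\pi\) prime. Then I must show that a map of complexes over \(A/\pi^k\) (a self-injective principal ring), which is zero on \(\H_0\) and \(\H_1\), vanishes in \(\mathcal D(A/\pi^k)\) after composing with the relevant truncation. I will use that over \(A/\pi^k\) modules split as direct sums of cyclics (Kaplansky/Prüfer for bounded modules) to kill the Ext obstruction. This is the step I expect to be most delicate.

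On \(R_1\), \(x_1\) is a non-zero-divisor with discrete \(V(x_1)\). Here \(N\otimes^L R_1/x_1\) is computed over \(R_1/x_1\), a ring with discrete spectrum whose local rings are principal (they are localisations of \(R\)). Hence it is a zero-dimensional ring with principal localisations, and I want it to be a product of local principal Artinian rings. If discreteness only gives von Neumann regular behaviour locally, I argue stalkwise: on a discrete spectrum, a map of modules vanishes iff it vanishes at every local ring, and each local case reduces to the PID quotient case above.

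For \((1)\Rightarrow(2)\) I will construct counterexamples. First, localisations of \(R\) inherit \(\S=\P\) by Remark \ref{rem_P}, so for a local ring \((R,\m)\) I must show principality. If some ideal needs two generators, I take \(r\in\m\) and build \(f\) from \(M=R/\Ann\)-type modules, e.g. \(f:R/(r)\oplus\ldots\to N\), which is seemingly but not actually divisible. The standard test is \(M=\m\) or \(M=R/I\), \(N=R/rI\).

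Next, for an arbitrary \(x\), I use the quotients \(R/x^n\) and the torsion \(\Ann(x^n)\). I show that \(\Ann(x^\infty)\) is generated by an idempotent, which splits off \(R_0\times R'\), and that \(\Ann(x)\) stabilises. Applying (1) to maps \(\bigoplus_n R/x^n\to\prod_n R/x^n\) (a Prüfer-type test module) should force either the eventual splitting of a principal factor \(R_2\) or discreteness of \(V(x_1)\).

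Finally, \((2)\Leftrightarrow(3)\) is a translation. Idempotent decompositions correspond to clopen sets, so the splitting for each \(x\) says that \(V(x)\) decomposes into clopen pieces with the stated structure. I compare this with the accumulation-point condition via generalisation: a non-zero-divisor is nonzero at minimal primes, and isolated non-reduced points correspond to the \(R_2\) factors. I expect the \((1)\Rightarrow(2)\) global splitting to be the main obstacle.
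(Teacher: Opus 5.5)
\textbf{Gap in $(2)\Rightarrow(1)$.} Your reduction to $x=\pi^k$ over a PID or DVR $A$, with the obstruction living in $\mathcal D(A/\pi^k)$, matches the paper: Lemma \ref{lem_der}, $R/x\simeq\prod_{\p\in V(x)}R_\p/x$, and passage to $R_\p$. The step you flag as delicate, however, does not work as proposed. By Lemma \ref{lem_nul}, a map $M\otimes^L_A A/\pi^k\to N\otimes^L_A A/\pi^k$ that is zero on $\H_0$ and $\H_1$ factors through a class in $\mathrm{Ext}^1_{A/\pi^k}(M/\pi^kM,N[\pi^k])$. Splitting $M/\pi^kM$ into cyclics (Pr\"ufer--Baer) only disposes of the free summands. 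For $0<j<k$ the periodic resolution gives $\mathrm{Ext}^1_{A/\pi^k}(A/\pi^j,X)\cong X[\pi^{k-j}]/\pi^jX$, which is nonzero for instance when $X=A/\pi^j$. Concretely, take $M=N=A/\pi^j$: both complexes are formal, and there are nonzero maps between them that vanish on homology. So ``zero on homology, plus $\H_0$ is a sum of cyclics'' cannot conclude; this is exactly why Proposition \ref{prop_thomas} needs $\mathbb F_p$ to be a field.

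The missing input is that $f$ vanishes on $M[x]$ \emph{as a module map on a direct summand of $M$}, not merely on $\H_1$. Lemma \ref{lem_dec} splits $M$ itself as $\bigoplus_{j<k}(A/\pi^j)^{\oplus I_j}\oplus M'$ with $M'/\pi^k$ free. Pr\"ufer--Baer does not give this, since $M$ is unbounded; the lemma uses self-injectivity of $A/\pi^j$ to split these cyclic submodules off. Then $f$ is identically zero on the bounded part, because it lies in $M[x]$. On $M'$, Lemma \ref{lem_nul} applies because $\H_0$ is projective.

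\textbf{Gap in $(1)\Rightarrow(2)$.} Here the proposal does not yet contain an argument. Your candidate tests ($M=\m$, or $R/I\to R/rI$) are never shown to be seemingly but not actually divisible. Moreover, finitely presented tests cannot detect the non-Noetherian failures at all: by Remark \ref{rem_fin}, $\S^R=\P^R$ holds for finitely presented modules over $k[\sqrt[2^\infty]{x}]$. The paper needs genuinely infinitely generated modules:
\begin{itemize}
\item Lemma \ref{lem_tech} maps the colimit $R[\tfrac{1}{x_1x_2\cdots}]$ to a quotient of $(R/\Ann(x))[\tfrac{1}{x_1x_2\cdots}]\oplus R\{y_1,y_2,\dots\}$. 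This rules out $\m=\m^2$ and shows $\bigcap\m^n$ is $t$-power torsion.
\item The map $R/y\xrightarrow{t}R/\m^2$ gives $\m=(t)$.
\item A module of $t$-adically null sequences, compared with a countably free $R/I$-module, forces $\bigcap\m^n=0$.
\end{itemize}
Your global claims are likewise asserted without proof: that $\Ann(x^\infty)$ is generated by an idempotent, that the annihilators stabilise, and that $\bigoplus R/x^n\to\prod R/x^n$ ``should force'' the splitting.

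The paper never attacks the global splitting head-on. It proves pointwise statements: Lemma \ref{lem_conv}, via multiplication by $x$ from $R/\p^{k+1}$ to $R/(I\cap\p^{k+1})$, and Lemma \ref{lem_dom}. These yield condition (3). It then produces the idempotents topologically: the locus $U$ of primes where $x=0$ in $R_\p$ is clopen, and $V(x)\setminus U$ is finite and discrete, because non-isolated points are reduced. Routing through (3) is the idea your plan is missing. Your ``translation'' $(3)\Rightarrow(2)$ would need that same clopen-ness argument, which you do not supply.
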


We break up the proof into a series of lemmas. We will first show that (1)\(\implies\)(3), then (3)\(\implies\)(2), and lastly (2)\(\implies\)(1).

The main step in proving (1)\(\implies\)(3) is doing it in the local case. We will do so in Lemma \ref{lem_loc}.

\begin{nota}
Let \(R\) be a ring. By \(R\{y_1,y_2...\}\) we mean the free \(R\)-module \(\bigoplus_{\mathbb N} R\) with generators named \(y_1,y_2,...\) .

Given a sequence of elements \(x_1,x_2,...\in R\) and an \(R\)-module \(M\), by \(M[\frac1{x_1x_2...}]\) we denote the colimit of the diagram
\[M\xrightarrow{x_1}M\xrightarrow{x_2}M\xrightarrow{x_3}...\ .\]
That is, \(M[\frac1{x_1x_2...}]=\{\frac{m}{x_1...x_n} : m\in M, n\geq 0\}/_\sim\) where \(\frac{m}{x_1...x_n}\sim \frac{m'}{x_1...x_k}\) if and only if there exists \(t\geq \max(k,n)\) such that \(x_{n+1}...x_t\cdot m = x_{k+1}...x_t\cdot m'\) in \(M\).
\end{nota}

\begin{lem} \label{lem_tech}
Let \((R,\m)\) be a local ring and let \(x\in \m\). Suppose there is a sequence of elements \(x_1,x_2,...\in \m\) such that the product \(x_1...x_n\) is not contained in \(\left(x,\Ann(x)\right)\) for any \(n\). Then \(\S^R_x\neq\P^R_x\).
\end{lem}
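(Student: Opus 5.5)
The plan is to construct an explicit pair of modules and a morphism \(f\) that is seemingly divisible by \(x\) but not divisible by \(x\). The construction uses the telescope notation just introduced: the sequence \(x_1,x_2,\dots\) lets us build modules with elements that are ``infinitely divisible along the \(x_i\)''. By the hypothesis, these divisions can never be absorbed into \((x,\Ann(x))\). I should say at the outset that I have not found a pair \((M,N)\) that works; the construction in the third paragraph is still open.

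Throughout, set \(C=R[\frac1{x_1x_2\dots}]\). It has the presentation
\[C\cong R\{y_1,y_2,\dots\}/(y_n-x_ny_{n+1} : n\geq 1),\qquad y_n\mapsto \tfrac1{x_1\cdots x_{n-1}}.\]
The general shape of the argument is as follows.

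\begin{enumerate}
\item Take \(M\) to be a quotient of \(R\{y_1,y_2,\dots\}\) by relations modelled on this presentation. Take \(N\) to be built from \(C\), or from a quotient of \(C\), and define \(f\) on generators by \(f(y_n)=x\cdot\frac{1}{x_1\cdots x_{n-1}}\), possibly plus a correction.
\item Check that \(f\) respects the relations, and that \(f(M)\subset xN\); the second point is automatic from the formula.
\item Check \(f(M[x])=0\). Write \(f=x\cdot h\), where \(h\) is the evident map on the free module. If \(xm\) lies in the relation submodule, then \(m\) agrees with an element of the relation submodule up to \(\Ann(x)\cdot R\{y_i\}\). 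Hence \(h(m)\in\Ann(x)N\), and so \(f(m)=0\).
\item Show that no \(g\) with \(xg=f\) exists. Any such \(g\) satisfies \(g(y_n)=\frac1{x_1\cdots x_{n-1}}+t_n\) with \(t_n\in N[x]\), and the relations of \(M\) impose compatibility conditions on the \(t_n\). Reducing modulo \(x\) and \(\Ann(x)\), these conditions would force an element \(x_1\cdots x_n\) to lie in \((x,\Ann(x))\), contradicting the hypothesis.
\end{enumerate}

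The main obstacle is choosing \(M\) and \(N\) so that the relations are tight enough to obstruct every lift \(g\), yet loose enough that \(f\) is still well defined and kills \(M[x]\). Two natural candidates fail:
\begin{itemize}
\item With relations \(x(y_n-x_ny_{n+1})\) and \(N=C\), the map \(h\) itself is a divisor of \(f\).
\item With \(M=N=C\) and \(f=x\cdot\mathrm{id}\), the identity is a divisor.
\end{itemize}
So the remaining freedom has to be used to destroy these obvious lifts, for example by passing to a quotient such as \(N=C/xC\) twisted by \(\Ann(x)\), or by adding an \(\Ann(x)\)-valued correction to the relations of \(M\).

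To find the right choice, I would first compute the obstruction to dividing \(f\), guided by the derived argument of Proposition \ref{prop_thomas}. That argument attaches to a seemingly divisible \(f\) a class, roughly in \(\text{Ext}^1(M/x,N[x])\). For a telescope \(M\) this Ext group is a \(\lim^1\)-type term of the inverse system along the \(x_i\). The hypothesis \(x_1\cdots x_n\notin(x,\Ann(x))\) for all \(n\) should be exactly what makes this \(\lim^1\) class nonzero. I would then choose \(M\) and \(N\) so that the class of \(f\) is the one detecting this non-vanishing, and finally verify step 4 directly by the elementwise argument sketched there.
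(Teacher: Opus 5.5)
\textbf{There is a genuine gap.} As you say yourself, the proposal never specifies $M$, $N$ and $f$. That construction is the entire content of the lemma, so what you have is a plan rather than a proof. Two points show that the missing piece is not routine.

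First, step 3 presupposes $(K:_F x)\subseteq K+\Ann(x)F$ for the relation submodule $K$ of $F=R\{y_1,y_2,\dots\}$. This is a real condition on $K$: it fails for $K=xF$, because $(x,\Ann(x))\neq R$ under your hypothesis.

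Second, neither step 4 nor your $\lim^1$ heuristic uses that $R$ is local, and the statement is false without locality. Take $R=\mathbb Z$, $\m=(3)$, $x=6$ and $x_i=3$. No $3^n$ lies in $(6,\Ann(6))=(6)$, yet $\S^{\mathbb Z}_6=\P^{\mathbb Z}_6$ by Corollary \ref{cor_dom}. So the hypothesis alone cannot make the obstruction non-zero; some unit argument in the local ring has to appear.

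\textbf{The missing idea.} Keep the telescope as the source, and put the incompatibility into the target by \emph{enlarging} $N$ rather than passing to a quotient. Concretely:
\begin{itemize}
\item Take $M=C=R[\frac1{x_1x_2\dots}]$ and put $\bar C=R/\Ann(x)[\frac1{x_1x_2\dots}]$.
\item Let $N=\bigl(\bar C\oplus R\{y_1,y_2,\dots\}\bigr)/\bigl(xy_i-\frac1{x_1\cdots x_i}\bigr)_{i>0}$. That is, adjoin to $\bar C$ a free $x$-th root $y_i$ of each $\frac1{x_1\cdots x_i}$, without imposing $x_{i+1}y_{i+1}=y_i$.
\item Let $f$ be the composite $C\to\bar C\to N$.
\end{itemize}

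Seeming divisibility is then immediate. We have $f(C)\subset xN$, since $\frac r{x_1\cdots x_i}=x\,ry_i$ in $N$. We have $f(C[x])=0$: if $x\cdot\frac r{x_1\cdots x_n}=0$ in $C$, then $x_{n+1}\cdots x_tr\in\Ann(x)$ for some $t$, so the element already vanishes in $\bar C$. The passage to $R/\Ann(x)$ is also what makes $\bar C\to N$ injective.

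Non-divisibility goes as follows. Suppose $f=xg$ and write $g(1)=\frac r{x_1\cdots x_n}+\sum_{k\le n}r_ky_k$. Compose $g$ with the projection $N\to R/x$ that sends $y_j\mapsto 1$ and kills $\bar C$ and the other $y_i$. Using $1=x_1\cdots x_{j+1}\cdot\frac1{x_1\cdots x_{j+1}}$ in $C$, this gives $r_j\equiv s_jx_1\cdots x_j \pmod x$ with $s_j\in\m$. Substitute this into $xg(1)=1$, which holds in $\bar C$ by injectivity, and clear denominators. This yields $(1-s_1-\dots-s_n)\,x_1\cdots x_t\in(x,\Ann(x))$ for some $t\ge n$. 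Locality makes $1-\sum s_j$ a unit, contradicting the hypothesis.

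This is in effect the $\lim^1$-type obstruction you anticipated, but realised by an explicit pushout. The derived computation you wanted as a guide identifies $N\otimes^L_RR/x$ with the cone of $x$ only when $x$ is a non-zero divisor, which is not assumed here.
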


\begin{proof}
 We can consider the following morphism of \(R\)-modules
 
\adjustbox{scale=0.9,center}{\begin{tikzcd} f:R\left[{\textstyle \frac1{x_1x_2...}}\right] \ar[r] & \quotient{\left(\quotient{R}{\Ann(x)}\left[\textstyle \frac1{x_1x_2...}\right] \oplus R\{y_1,y_2,...\}\right)}{\left\{xy_i-\textstyle \frac1{x_1...x_i}\right\}_{i>0}} \end{tikzcd}}
given by the canonical projection onto the first factor of the sum. By construction, it is seemingly divisible by \(x\). We will show that \(f\) is not divisible by \(x\).

Suppose to the contrary that there is \(g\) such that \(f=x\cdot g\). We can write
\[g(1)= \frac{r}{x_1...x_n} + r_1y_1 + r_2y_2 + ... + r_ny_n\]
for some \(n\) and \(r\in R/\Ann(x)\), \(r_k\in R\). Multiplying both sides by \(x\) yields
\[1 = \frac{rx}{x_1...x_n} + \frac{r_1}{x_1} + \frac{r_2}{x_1x_2} + ... + \frac{r_n}{x_1...x_n}\]
in \(R/\Ann(x)[\frac{1}{x_1x_2...}]\). This gives (after potentially increasing \(n\))
\[x_1...x_n = rx + r_1x_2...x_n + r_2x_3...x_n + ... + r_n \label{eq_1} \tag{\(*\)}\]
in \(R/\Ann(x)\).

For each \(j>0\), we have the projection

\adjustbox{scale=0.9,center}{\begin{tikzcd} \quotient{\left(\quotient{R}{\Ann(x)}\left[\textstyle \frac1{x_1x_2...}\right] \oplus R\{y_1,y_2,...\}\right)}{\left\{xy_i-\textstyle \frac1{x_1...x_i}\right\}_{i>0}} \ar[r] & \quotient{R\{y_j\}}{x}.\end{tikzcd}}
Let \(g_j\) denote its pre-composition with \(g\). We then have
\[r_j=g_j(1)=x_1...x_{j+1}\cdot g_j\left(\frac{1}{x_1...x_{j+1}}\right)\]
in \(R/x\), so we get \(r_j=s_jx_1...x_j\) in \(R/x\) with \(s_j=x_{j+1}\cdot g(\frac{1}{x_1...x_{j+1}})\in \m\). This, combined with (\ref{eq_1}), gives
\[(1-s_1-...-s_n)\cdot x_1...x_n\in (x,\Ann(x)).\]
But \((1-s_1-...-s_n)\) is invertible, so \(x_1...x_n\in (x,\Ann(x))\) contradicting our assumptions.
\end{proof}

\begin{lem} \label{lem_loc}
Let \((R,\m)\) be a local ring for which \(\S^R=\P^R\). Then \(R\) is a principal ideal ring. In particular, \(\bigcap\m^n=0\) in \(R\).
\end{lem}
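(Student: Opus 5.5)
The plan is to combine two tools. The first is Lemma \ref{lem_tech} in contrapositive form: since \(\S^R_x=\P^R_x\), for every \(x\in\m\) and every sequence \(x_1,x_2,\dots\in\m\), some product \(x_1\cdots x_n\) lies in \((x,\Ann(x))\). The second is Remark \ref{rem_P}, which lets us pass to quotients of \(R\). I would first show that \(\m/\m^2\) has dimension at most one over \(k=R/\m\), then upgrade this to \(\m=(t)\) together with \(\bigcap\m^n=0\), and finally read off that every ideal is principal.

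\emph{Step 1: \(\dim_k\m/\m^2\le 1\).} Suppose not. Choose \(a,b\in\m\) with linearly independent images in \(\m/\m^2\), and quotient \(R\) by \(\m^2\) together with (preimages of) a complement of \(\langle \bar a,\bar b\rangle\). By Remark \ref{rem_P}, \(S=k[a,b]/(a,b)^2\) would still satisfy \(\S^S=\P^S\). Here Lemma \ref{lem_tech} is useless, since \((a,\Ann(a))=\m\) contains every product. Instead I exhibit a counterexample directly. Let \(f:S/(b)\to S\) be given by \(1\mapsto a\); it is well defined because \(ab=0\).
\begin{itemize}
\item The \(a\)-torsion of \(S/(b)\) is spanned by the class of \(a\), and \(f(a)=a^2=0\).
\item Clearly \(f(S/(b))\subset aS\), so \(f\) is seemingly divisible by \(a\).
\item Any \(g\) with \(f=a g\) has \(b\,g(1)=0\), which forces \(g(1)\in\m\). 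Then \(a\,g(1)=0\neq a\), a contradiction.
\end{itemize}
Hence \(\m=(t)+\m^2\) for some \(t\). Iterating gives \(\m=(t)+\m^k\) for all \(k\), since \(\m^2=t\m+\m^3\) and so on.

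\emph{Step 2: \(\m=(t)\) and \(\bigcap\m^n=0\).} This is the main obstacle, because no finiteness is available for Nakayama. I would first try Lemma \ref{lem_tech} with the constant sequence \(x_i=t\) and suitable \(x\). For \(x=t^k\) we get \(t^n\in(t^k,\Ann(t^k))\) for some \(n\). Writing \(t^n=rt^k+c\) with \(ct^k=0\), and multiplying by powers of \(t\), should show that either \(t\) is nilpotent or \(\Ann(t^k)\) stabilises in a controlled way.

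\begin{itemize}
\item \emph{\(t\) nilpotent.} Then \(\m=(t)+\m^k\) with \(\m^k\subset (t)\) for large \(k\) gives \(\m=(t)\) and \(\m^N=0\).
\item \emph{\(t\) not nilpotent.} For an element \(y\in\bigcap\m^n\), take \(x=y\) and \(x_i=t\) in the lemma to get \(t^n=ry+c\) with \(cy=0\). Since \(y\in t^{n+1}R+\m^N\), I would like to absorb the \(ry\) term and deduce \(t^n(1-\text{unit-multiple of }t)\in\Ann(y)\), so \(t^n y=0\). Combining this with \(y\in t^{m}R\) for all \(m\) should give \(y=0\), and then \(\m=(t)\) by a completeness-free argument: any \(z\in\m\) is \(ut+w\) with \(w\in\m^k\), and \(w\) can be pushed into \(\bigcap\m^n=0\) modulo \((t)\).
\end{itemize}

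If this bookkeeping fails, the fallback is to build, in the style of the proof of Lemma \ref{lem_tech}, an explicit seemingly-divisible-but-not-divisible map out of \(R[\tfrac1{t t\cdots}]\) for a nonzero \(y\in\bigcap\m^n\).

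\emph{Step 3: every ideal is principal.} Once \(\m=(t)\) and \(\bigcap\m^n=0\), every nonzero element is \(u t^n\) with \(u\) a unit. So every nonzero ideal equals \((t^n)\) for the minimal such \(n\), and \(R\) is a principal ideal ring. The "in particular" clause \(\bigcap\m^n=0\) was obtained along the way in Step 2.
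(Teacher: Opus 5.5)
Your Step 1 is correct. You only use $\m_S^2=0$ and $\dim_k\m_S=2$, so the identification $S\cong k[a,b]/(a,b)^2$ is not needed. The gap is in passing from $\dim_k\m/\m^2\le 1$ to $\m=(t)$.

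First, you never exclude $\m=\m^2$. In that case ``$\m=(t)+\m^2$'' holds with $t=0$ and carries no information.

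Second, in your nilpotent case the claim $\m^k\subset(t)$ for large $k$ has no support. Let $A$ be a valuation ring with value group $\mathbb Q$ and $R=A[t]/(t^2,t\m_A)$. Then $\m=(t)+\m^2$ and $t^2=0$, yet $\m^k=\m_A\not\subset(t)$ for all $k\ge 2$. The constant sequence $t,t,\dots$ in Lemma \ref{lem_tech} says nothing once $t$ is nilpotent.

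So the hypothesis has to be used again here, and the paper does this in two steps.
\begin{itemize}
\item It proves $\m\ne\m^2$ via Lemma \ref{lem_tech}. If $\m=\m^2$, there is $x\neq 0$ with $(x,\Ann(x))\ne\m$. Repeatedly factoring an element of $\m\setminus(x,\Ann(x))$ using $\m=\m^2$ produces $x_1,x_2,\dots$ whose partial products avoid $(x,\Ann(x))$.
\item Then take $t\in\m\setminus\m^2$ and any $y\in\m\setminus(t)$. The map $R/y\to R/\m^2$, $1\mapsto t$, is seemingly divisible by $t$. A divisor $g$ has $g(1)$ a unit, which forces $y\in\m^2$. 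Writing $t=b+(t-b)$ with $b\in\m\setminus(t)$ then puts $t\in\m^2$, a contradiction.
\end{itemize}
This gives $\m=(t)$ without Nakayama. It is essentially your Step 1 map, applied to an arbitrary $y\notin(t)$ instead of to a basis of $\m/\m^2$.

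The argument for $\bigcap\m^n=0$ has a more serious gap. Absorbing the $ry$ term to get $t^ny=0$ works, as in the paper, only once $\m=(t)$ is known, so that $y=t^nv$ with $v\in\m$. With merely $y\in t^{n+1}R+\m^N$, the term $ry\,\m^N$ is uncontrolled.

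More importantly, a $t$-power torsion element of $\bigcap_m t^mR$ need not vanish. Let $V$ be a valuation ring with value group $\mathbb Z\times\mathbb Z$ ordered lexicographically, with $\nu(t)=(0,1)$ and $\nu(\sigma)=(1,0)$, and put $R=V/(\sigma)$. Then $\m=(t)$ and $t$ is not nilpotent. The class $y$ of $\sigma/t^5$ is nonzero, satisfies $t^5y=0$, and lies in $\bigcap_m t^mR$.

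Moreover, in this ring, for every $x\in\m$ and every sequence in $\m$, some partial product lies in $(x,\Ann(x))$. So Lemma \ref{lem_tech} cannot detect this ring, and your fallback is left without content.

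The paper needs a genuinely different test map here. Take a nonzero $s\in I=\bigcap\m^n$ with $ts=0$, so that $\Ann(s)=(t)$. Multiplication by $s$, from the module $M$ of $t$-adically null sequences in $R$ to $\bigoplus_{\mathbb N}R$, is seemingly divisible by $s$. A divisor induces an injection of the corresponding $R/I$-module $N$ into $\bigoplus_{\mathbb N}R/I$. Since $R/I$ is a PID, $N$ would then be free, which it is not.

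Finally, your closing ``completeness-free'' step is also unjustified as written. The element $w$ depends on $k$, so you only obtain $z\in\bigcap_k\big((t)+\m^k\big)$.
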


\begin{proof}
Assume \(R\) is not zero nor a field. We will prove the statement in three steps:

1. We show that \(\m\ne\m^2\).

2. We show that \(\m\) is principal.

3. We show that \(\bigcap_{n>0}\m^n=0\).

\noindent Note that, it follows from the last two that \(R\) is principal. Indeed, they imply that every non-zero element of \(R\) is of the form \(ut^k\) for \(u\) a unit and \(t\) a generator of \(\m\).

\emph{Step 1.} Suppose to the contrary that \(\m=\m^2\). We first show that there is a non-zero \(x\in\m\) for which \(\m\ne (x,\Ann(x))\). Indeed, if \(\m=\m^2\) and \(\m=(x,\Ann(x))\), then \(x\in (x^2,\Ann(x))\), so \(x^2\in (x^3)\). This implies \(x^2=0\) by locality, so \(x\in \Ann(x)\). But then \(\m=\Ann(x)\). If this was true for all \(x\in\m\), we would have \(\m=\m^2 = 0\), contradiction. Let us therefore choose a non-zero \(x\in \m\) for which \(\m\ne(x,\Ann(x))\).

We now claim that there is a sequence \(x_1,x_2...\in\m\) such that for any \(n\) the product \(x_1x_2...x_n\) is not contained in \((x,\Ann(x))\). Indeed, as \(\m\ne (x,\Ann(x))\), there is \(x_1'\in \m\backslash (x,\Ann(x))\). Using \(\m=\m^2\), we write \(x_1'\) as a sum of products of elements of \(\m\). At least one of the terms of the sum will still not be contained in \((x,\Ann(x))\), say \(x_1x_2'\). Continuing this process for \(x_2'\) and so on we get our sequence. This contradicts Lemma \ref{lem_tech}, so \(\m\ne \m^2\).

\emph{Step 2.} Choose any \(t\in \m\backslash \m^2\). We will show that \(\m=(t)\). For any \(y\in \m\backslash(t)\) consider the \(R\)-module map
\[\begin{tikzcd} f: R/y \ar[r] & R/\m^2 \end{tikzcd}\]
given by multiplication by \(t\). It is clear that everything in the image of \(f\) is divisible by \(t\). If \(z\in R\) belongs to the \(t\)-torsion of \(R/y\), then \(zt=ay\) for some \(a\in R\). As \(y\not\in (t)\), the element \(a\) cannot be invertible, so \(a\in\m\) implying \(zt\in\m^2\), so that \(f\) vanishes on \(t\)-torsion as well. Hence, \(f\) is seemingly divisible by \(t\), so \(f=t\cdot g\) by assumption. We have \(t\cdot g(1) = f(1) = t\) in \(R/\m^2\). As \(t\not\in \m^2\), this means that \(g(1)\) is invertible, so \(y\in \m^2\), because \(y\cdot g(1) = g(y)=0\) in \(R/\m^2\). Hence, we get that \(\m\backslash (t)\subset \m^2\). Now, if \(\m\ne(t)\), there is a \(b\in\m\backslash(t)\). But also \(t-b\in \m\backslash(t)\) yielding \(t=b+(t-b)\in \m^2\), a contradiction.

\emph{Step 3.} Let \(I=\bigcap_{n>0} \m^n\). Recall that \(\m=(t)\). We will first prove that all elements of \(I\) are \(t\)-power torsion, that is vanish after multiplication with some power of \(t\). Take any \(w\in I\). By considering the constant sequence \(t,t,t...\) in Lemma \ref{lem_tech}, we deduce that there is a \(k\) for which \(t^k\in (w,\Ann(w))\). Hence, we get that \(t^kw=aw^2\) for some \(a\in R\). This yields \(t^kw\, (1-av)=0\), where \(v\in \m\) is such that \(w=t^kv\). But \(1-av\) is invertible, so \(t^kw=0\) as desired.

We will now prove that \(I=0\). This is the case if \(t\) is nilpotent, so assume it is not. Consider the \(R\)-module
\[M=\left\{(r_0,r_1,...) : r_i\in R,\ \forall_k\exists_N\forall_{n>N}\, t^k|r_n\right\}\]
and the \(R/I\)-module
\[N=\left\{(r_0,r_1,...) : r_i\in R/I,\ \forall_k\exists_N\forall_{n>N}\, t^k|r_n\right\}.\]
There is a surjection \(M\twoheadrightarrow N\) that sends each \(r_i\) to its class modulo \(I\). Now, assume \(I\ne 0\), so that there exists a non-zero \(s\in I\) that is \(t\)-torsion. Multiplication by \(s\) on \(M\) has image in the submodule of finite sequences, hence induces an \(R\)-module map
\[\begin{tikzcd} f: M\ar[r] & \bigoplus_{\mathbb N} R. \end{tikzcd}\]
It is seemingly divisible by \(s\), so \(f=s\cdot g\) by assumption. Let \(h\) denote the post-composition of \(g\) with the projection \(\bigoplus_{\mathbb N} R \to \bigoplus_{\mathbb N} R/I\). Since \(\bigoplus_{\mathbb N}R/I\) has no non-zero infinitely \(t\)-divisible elements and the kernel of \(M\twoheadrightarrow N\) consists only of such, we get that \(h\) factors through \(N\). Observe that, because \(f=s\cdot g\) and \(\Ann(s)=(t)\), whenever an element of \(N\) is not \(t\)-divisible, then its image under \(h\) is non-zero. As every element of \(N\) can be maximally divided by a power of \(t\) and \(\bigoplus_{\mathbb N}R/I\) is \(t\)-torsion free, this implies that \(h:N\to \bigoplus_{\mathbb N}R/I\) is injective. But, since \(R/I\) is a PID, this would mean that \(N\) is countably free as an \(R/I\)-module, which it is not. Hence, we must have \(I=0\).
\end{proof}

After addressing the local case, the next step is to control the geometry of \(\Spec R\).

\begin{lem} \label{lem_conv}
Let \(R\) be a ring for which \(\S^R=\P^R\). Let \(x\in R\) and let \(\p\) be a point of \(\Spec R\) whose every open neighbourhood contains some \(\q\) with \(\p\not\subset \q\) and \(x\in \q\). Then \(x\) is zero in \(R_\p\).
\end{lem}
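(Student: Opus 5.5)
The first step is to dispose of the easy case. If \(x\notin\p\), then \(D(x)\) is an open neighbourhood of \(\p\) containing no prime \(\q\) with \(x\in\q\), which contradicts the hypothesis. So \(x\in\p\). By Remark \ref{rem_P} and Lemma \ref{lem_loc}, \(R_\p\) is a local principal ideal ring with \(\bigcap\p^nR_\p=0\). Hence if \(x\neq0\) in \(R_\p\), we can write \(x=ut^k\) in \(R_\p\) with \(u\) a unit, \(t\) a generator of \(\p R_\p\), \(k\ge1\), and \(t^k\ne0\). We argue by contradiction, assuming \(x\ne0\) in \(R_\p\), and aim to build a morphism in \(\S^R_x\setminus\P^R_x\) by a global version of the construction in Lemma \ref{lem_tech}.

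Next we choose data adapted to the hypothesis. Inductively pick elements \(s_1,s_2,\dots\notin\p\) and primes \(\q_1,\q_2,\dots\) with \(x\in\q_n\), \(\p\not\subset\q_n\), and \(\q_n\in D(s_1\cdots s_n)\). This is possible because each \(D(s_1\cdots s_n)\) is an open neighbourhood of \(\p\). We also choose each \(s_{n+1}\in\q_n\setminus\p\), which exists since \(\q_n\not\supset\p\); this ensures the \(\q_n\) are "separated" by later factors. Then form, as in Lemma \ref{lem_tech}, the map
\[f:R\left[\textstyle\frac1{s_1s_2\dots}\right]\to\Big(R/\Ann(x)\left[\textstyle\frac1{s_1s_2\dots}\right]\oplus R\{y_1,y_2,\dots\}\Big)\Big/\Big\{xy_i-\textstyle\frac1{s_1\cdots s_i}\Big\}.\]
It is seemingly divisible by \(x\) by construction.

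Now suppose \(f=x\cdot g\). Exactly as in Lemma \ref{lem_tech}, we obtain the relation
\[s_1\cdots s_n=rx+\sum_j r_js_{j+1}\cdots s_n\]
in \(R/\Ann(x)\), together with \(r_j\equiv\sigma_j s_1\cdots s_j \pmod{x}\) for some \(\sigma_j\in s_{j+1}R\). This yields
\[\Big(1-\sum_{j\le n}\sigma_j\Big)s_1\cdots s_n\in(x,\Ann(x)),\qquad \sigma_j\in(s_{j+1}).\]
The contradiction should come from localizing at a suitable prime. Note that \(\p\) itself is useless for this purpose, because the \(s_j\) are units there. The natural candidate is \(\q_n\), where \(s_1\cdots s_n\) is a unit, \(x\in\q_n\), and \(\sigma_n\in(s_{n+1})\subset\q_n\). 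If the earlier \(\sigma_j\) (for \(j<n\)) also lie in \(\q_n\), the factor \(1-\sum\sigma_j\) is a unit at \(\q_n\). We would then get \((x,\Ann(x))R_{\q_n}=R_{\q_n}\), which is impossible once \(x\) is non-zero in \(R_{\q_n}\), since then \(\Ann(x)_{\q_n}\subset\q_nR_{\q_n}\).

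The main obstacle is arranging these two auxiliary conditions: that the \(\sigma_j\) for \(j<n\) lie in \(\q_n\), and that \(x\ne0\) in \(R_{\q_n}\). For the first, I would try to refine the choice so that \(s_{j+1}\in\q_m\) for all \(m\ge j\), not just \(m=j\). If this cannot be done, I would pass to a subsequence of the \(\q\)'s and use that \(\sigma_j\) divisible by \(s_{j+1}\) gives room to insert extra factors. For the second, I would exploit \(x\neq0\) in \(R_\p\): the condition \(x\ne 0\) holds on the open set \(\operatorname{Supp}(Rx)^c\)'s complement, namely \(V(\Ann(x))\), which contains \(\p\). I would then try to choose the \(\q_n\) inside \(V(\Ann(x))\). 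If the hypothesis does not directly permit this, I would first replace \(R\) by the quotient \(R/\Ann(x)\), which is allowed by Remark \ref{rem_P}; in that ring \(x\) is a non-zero divisor, so it is non-zero at every prime. This quotient reduction is the step I would try first, and then rerun the construction there.
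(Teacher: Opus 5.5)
There is a genuine gap, and it sits exactly at the step you flag as the main obstacle; the refinement you propose cannot work.

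\textbf{The unit argument at \(\q_n\) fails.} In Lemma \ref{lem_tech} the factor \(1-\sum s_j\) is a unit only because every \(x_j\) lies in \(\m\). In your global version, all you know is \(\sigma_j\equiv s_{j+1}c_j\) modulo \(x\), where \(c_j=g_j\bigl(1/(s_1\cdots s_{j+1})\bigr)\) is completely uncontrolled. To make \(s_1\cdots s_n\) invertible at \(\q_n\) you need \(s_2,\dots,s_n\notin\q_n\). So for every \(j<n\) the element \(s_{j+1}\) is a unit at \(\q_n\), and nothing stops \(1-\sum_j\sigma_j\) from lying in \(\q_n\).

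In effect your relation only says \(s_1\cdots s_n\in(x,\Ann(x))+s_1\cdots s_n\,(s_2,\dots,s_{n+1})\). After localizing at any prime where \(s_1\cdots s_n\) is invertible, the second ideal is already the unit ideal once \(n\ge 2\). Since \(n\) is dictated by \(g\), you cannot force \(n=1\), so there is no contradiction to extract. Your suggested fix, \(s_{j+1}\in\q_m\) for all \(m\ge j\), is self-contradictory: for \(m=j+1\) it asks \(s_{j+1}\in\q_{j+1}\), whereas \(\q_{j+1}\in D(s_1\cdots s_{j+1})\). Passing to subsequences does not remove the basic tension. At the prime where you localize, the \(s_j\) must be units (to keep the product outside the prime) and also non-units (to make \(1-\sum\sigma_j\) invertible).

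\textbf{The reduction to \(R/\Ann(x)\) is also unjustified.} The element \(x\) is a non-zero divisor in \(R/\Ann(x)\) only when \(\Ann(x^2)=\Ann(x)\); for \(R=k[t]/(t^2)\) and \(x=t\) it even becomes \(0\). Moreover, the hypothesis on \(\p\) need not descend to \(\Spec R/\Ann(x)=V(\Ann(x))\), since the witnessing primes \(\q\) may lie outside \(V(\Ann(x))\). So this reduction merely restates the problem of choosing \(\q_n\in V(\Ann(x))\). You also cannot use that \(R_\p\) is a domain, because Lemma \ref{lem_dom} is deduced from the present lemma.

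\textbf{The missing idea.} Package the hypothesis into a single ideal rather than a sequence of primes. Let \(I\) be the intersection of all \(\q\) with \(x\in\q\) and \(\p\not\subset\q\); the hypothesis says precisely \(I\subset\p\). The paper then proves \(x\in\p^k\) for all \(k\) by induction.

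\begin{itemize}
\item Use the map \(R/\p^{k+1}\to R/(I\cap\p^{k+1})\) given by multiplication by \(x\). It is well defined and seemingly divisible because \(x\in I\).
\item For a divisor \(g\), one has \(g(1)\p^{k+1}\subset\q\) for each such \(\q\). Together with \(\p\not\subset\q\), this gives \(g(1)\in I\subset\p\).
\item Hence \(xg(1)\in\p^{k+1}\). Since \(x-xg(1)\in I\cap\p^{k+1}\), it follows that \(x\in\p^{k+1}\).
\end{itemize}

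Lemma \ref{lem_loc}, which gives \(\bigcap\p^nR_\p=0\), then finishes the proof. Only cyclic modules are involved, with no infinite-generation construction.
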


\begin{proof}
Combining Remark \ref{rem_P} and Lemma \ref{lem_loc}, it suffices to show that \(x\in \bigcap_{n\geq 0} \p^n\). Assume that \(x\in\p^k\) for some \(k\). Let \(I\) be the intersection of all \(\q\) with \(\p\not\subset \q\) and \(x\in \q\). By assumption, we have \(\p\in V(I)\), so \(I\subset \p\). Consider the \(R\)-linear map
\[\begin{tikzcd}
f:R/\p^{k+1}\ar[r] & R/I\cap \p^{k+1}
\end{tikzcd}\]
given by multiplication by \(x\). If \(y\in R\) is in the \(x\)-torsion of the source, then \(xy\in\p^{k+1}\), so also \(xy\in I\cap\p^{k+1}\) as \(x\in I\). That is, \(f\) vanishes on \(x\)-torsion. Thus, it is seemingly divisible by \(x\), so \(f=x\cdot g\) by assumption. As \(g(\p^{k+1})=0\), we have \(g(1)\cdot \p^{k+1}\subset I\cap \p^{k+1}\). Hence, \(g(1)\in I\subset \p\), and so \(x\cdot g(1)\in I\cap \p^{k+1}\). Thus, as \(x = f(1) = x\cdot g(1) = 0\) in \(R/I\cap\p^{k+1}\), we get \(x\in \p^{k+1}\), so indeed \(x\in \bigcap_{n\geq 0} \p^n\).
\end{proof}

\begin{lem} \label{lem_dom}
Let \(R\) be a ring for which \(\S^R=\P^R\).  Let \(\p\) be a non-isolated point of \(\Spec R\). Then \(R_\p\) is a domain.
\end{lem}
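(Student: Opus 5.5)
The plan is to combine the local structure from Lemma \ref{lem_loc} with Lemma \ref{lem_conv}, splitting into two cases according to whether \(\p\) is a maximal ideal.

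By Remark \ref{rem_P} and Lemma \ref{lem_loc}, every localization of \(R\) at a prime is a local principal ideal ring. By the proof of that lemma, every non-zero element of such a ring has the form \(ut^k\), where \(u\) is a unit and \(t\) generates the maximal ideal. Hence either \(t\) is nilpotent, so the ring is Artinian with a single prime, or \(t\) is not nilpotent. In the second case \(\bigcap \m^n=0\) forces every non-zero element to be a non-zero divisor, since \(ut^k\cdot vt^l=uvt^{k+l}\neq0\). So the ring is a DVR, in particular a domain.

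\emph{Case 1: \(\p\) is not maximal.} Choose a prime \(\q\supsetneq\p\). Then \(R_\q\) is a local principal ideal ring with at least two primes, \(\p R_\q\subsetneq \q R_\q\). By the dichotomy above, \(R_\q\) is not Artinian, so it is a domain. Since \(R_\p\) is a localization of \(R_\q\), it is also a domain.

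\emph{Case 2: \(\p\) is maximal.} If \(R_\p\) is a field we are done. Otherwise \(R_\p\) is not a domain, so by the dichotomy it is Artinian and \(\p\) is also a minimal prime. Pick \(x\in\p\) whose image in \(R_\p\) generates \(\p R_\p\); in particular \(x\neq0\) in \(R_\p\). Since \(\p R_\p\) is nilpotent, there are \(n\) and \(s\notin\p\) with \(sx^n=0\) in \(R\). I will verify the hypothesis of Lemma \ref{lem_conv} for this \(x\) and \(\p\). Take a basic open neighbourhood \(D(a)\) of \(\p\). Then \(D(as)\) is also an open neighbourhood of \(\p\), and since \(\p\) is non-isolated it contains some prime \(\q\neq\p\). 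Because \(s\notin\q\) and \(sx^n=0\in\q\), we get \(x\in\q\). Because \(\p\) is maximal and \(\q\neq\p\), we get \(\p\not\subset\q\). Lemma \ref{lem_conv} then gives \(x=0\) in \(R_\p\), a contradiction. Hence \(R_\p\) is a field.

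The step I expect to need the most care is producing the primes \(\q\) with \(\p\not\subset\q\) that Lemma \ref{lem_conv} requires. Non-isolatedness alone could supply only specializations of \(\p\). The case split handles this: specializations are dealt with by the DVR argument of Case 1, and in Case 2 maximality of \(\p\) rules them out. The shrinking of the neighbourhood to \(D(as)\) is what guarantees \(x\in\q\).
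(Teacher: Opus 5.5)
Your argument is correct in substance and follows the paper's proof. The non-maximal case is exactly the paper's appeal to Lemma \ref{lem_loc}: a local principal ideal ring with two primes is a DVR, and \(R_\p\) is a localization of it. The maximal case uses Lemma \ref{lem_conv}, with maximality of \(\p\) guaranteeing \(\p\not\subset\q\).

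The only variation is in the maximal case. The paper applies Lemma \ref{lem_conv} to one of two elements \(x,y\) with \(xy=0\) in \(R_\p\), and leaves implicit the check that one of them works on every neighbourhood. You instead first use the Artinian/DVR dichotomy to get a nilpotent generator \(x\). For that single element, your shrinking to \(D(as)\) makes the verification immediate.

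One slip needs fixing. In Case 2, ``not a field'' does not imply ``not a domain''. Your closing ``Hence \(R_\p\) is a field'' is false in general: for \(R=\mathbb Z\), \(\p=(p)\) is maximal and non-isolated, but \(\mathbb Z_{(p)}\) is a DVR. Split instead on whether \(R_\p\) is a domain and conclude that it is. In the non-domain case, the dichotomy gives exactly the Artinian, non-field situation that your contradiction argument rules out.
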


\begin{proof}
The assertion follows from Lemma \ref{lem_loc} whenever \(\p\) is not maximal. Let us therefore assume that \(\p\) is maximal. Suppose there are \(x,y\in R\) such that \(xy=0\) in \(R_\p\). The same is then true in some open neighbourhood of \(\p\), but this means that either \(x\) or \(y\) satisfies the conditions of Lemma \ref{lem_conv}, so \(x=0\) or \(y=0\) in \(R_\p\). 
\end{proof}

We can now prove the first two implications from Theorem \ref{thm_main}.

\begin{proof}[Proof of (1)\(\implies\)(3) (Theorem \ref{thm_main})]
That every localization of \(R\) at a prime is principal follows from Lemma \ref{lem_loc} and that all non-reduced points of \(\Spec R\) are isolated by Lemma \ref{lem_dom}. Suppose now we are given \(\p\), \(\q\), and \(S\subset \Spec R\backslash\{\p\}\) such that \(\p\) is an accumulation point of \(S\) and \(\q\subset\p\). We want to show that \(\q\) is an accumulation point of \(S\). Assume to the contrary, that it is not. Then there exists \(x\in R\) such that \(\q\in D(x)\) and \(S\subset V(x)\). As \(\p\) is not contained in any ideal from \(S\), because \(R\) is one-dimensional by Lemma \ref{lem_loc}, it follows from Lemma \ref{lem_conv} that \(x=0\) in \(R_\p\). But then \(x\in \q\), contradiction.
\end{proof}

\begin{proof}[Proof of (3)\(\implies\)(2) (Theorem \ref{thm_main})]
Let \(x\in R\). We want to prove the decomposition from condition (2). Consider the largest open subset \(U\subset \Spec R\) restricted to which \(x=0\). It can equivalently be described as those \(\p\in\Spec R\) such that \(x=0\) in \(R_\p\). Note that whenever \(\q\) is an accumulation point of some subset \(S\subset V(x)\) not containing \(\q\), then \(\q\in U\). Indeed, by assumption, every prime ideal contained in \(\q\) is also an accumulation point of \(S\). In particular, \(x\) is contained in every prime ideal of \(R_\q\). But \(R_\q\) is reduced, because non-reduced points are isolated. Hence, \(x=0\) in \(R_\q\). This observation implies that \(U\) is clopen and that \(V(x)\backslash U\) is finite and discrete. Therefore, we can write \(R=R_0\times R'\) with \(x=(0,x')\) and such that \(V(x')\) is discrete (so necessarily finite, because quasi-compact).

We now work with \(R'\). Consider the intersection of \(V(x')\) with the non-reduced locus. It is finite and consists of isolated points only, so is clopen. We can therefore write \(R'=R_1\times R_2\) with \(x=(x_1,x_2)\) such that \(x_1\) is non-nilpotent in \((R_1)_\p\) for any \(\p\) and where \(R_2\) is a finite product of principal rings, hence is principal itself. A non-nilpotent element of a local principal ring is not zero nor a zero divisor. This implies that \(x_1\) is a non-zero divisor in \(R_1\), which proves our decomposition.
\end{proof}

For the last implication we will need some technical lemmas.

\begin{lem} \label{lem_dec}
Let \(R\) be a ring and let \(\p\subset R\) be a finitely generated maximal ideal for which \(R_\p\) is a principal ideal ring. Then, for any \(n\geq 1\), every \(R\)-module \(M\) admits a decomposition of the form \[M\simeq (R/\p)^{\oplus I_1}\oplus (R/\p^2)^{\oplus I_2}\oplus...\oplus (R/\p^{n-1})^{\oplus I_{n-1}} \oplus M'\] with \(M'\) such that \(M'/\p^n\) is a free \(R/\p^n\)-module.
\end{lem}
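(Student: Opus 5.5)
The plan is to reduce everything to a statement about modules over the ring \(A=R/\p^n\), and then lift a decomposition of \(M/\p^n\) back to \(M\). Since \(\p\) is maximal, \(A\) is local with maximal ideal \(\p/\p^n\), so \(A\simeq R_\p/\p^nR_\p\). By hypothesis this is a local Artinian principal ideal ring. Fix \(t\in\p\) whose image generates \(\p R_\p\); then every ideal of \(A\) is of the form \(t^kA\), and \(A\) is uniserial. I will use the standard fact that every module over a local Artinian principal ideal ring is a direct sum of cyclic modules \(A/t^kA\simeq R/\p^k\), \(1\le k\le n\). This follows from Prüfer–Baer/Kaplansky for bounded modules over a uniserial ring: the elements of maximal height generate a pure bounded, hence direct, summand, and one proceeds by induction on the exponent. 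Applying it to \(M/\p^nM\) gives
\[M/\p^nM\simeq N\oplus F,\qquad N=\bigoplus_{k<n}(R/\p^k)^{\oplus I_k},\qquad F \text{ free over } A.\]

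Next I split \(N\) off \(M\) itself. Let \(\pi:M\to M/\p^nM\to N\) be the projection. For each basis generator \(e\) of a summand \(R/\p^k\) of \(N\) (with \(k<n\)), I want a lift \(m\in M\) of \(e\) with \(\p^k m=0\) in \(M\), not merely \(\p^k m\subset \p^nM\). Given such lifts, the assignment \(e\mapsto m\) defines an \(R\)-linear map \(\sigma:N\to M\) with \(\pi\sigma=\mathrm{id}_N\). Hence \(M\simeq N\oplus M'\) with \(M'=\ker\pi\). Since \(\p^{n-1}N=0\), we have \(\p^nM=\p^nM'\). Therefore \(M'/\p^nM'\) is the kernel of \(M/\p^nM\to N\), which is \(F\), and so it is free over \(R/\p^n\), as the lemma requires.

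The main obstacle is producing the lifts killed exactly by \(\p^k\). A naive lift \(m\) of \(e\) only satisfies \(\p^km\subset\p^nM\). Here I use that \(\p\), hence \(\p^k\), is finitely generated. Since \(\p^kR_\p=t^kR_\p\), there is \(s\notin\p\) with \(s\p^k\subset t^kR\). Also, \(s\) is a unit modulo \(\p^n\), so replacing \(m\) by a multiple of it by an inverse of \(s\) mod \(\p^n\) does not change its image in \(M/\p^n M\). It then suffices to correct \(m\) so that \(t^k m=0\) and \(m\) is killed by the finitely many generators of \(\p^k\).

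For the correction I write \(t^km\in\p^nM\). Modulo \(\p^{N}M\) for large \(N\), this can be expressed as \(t^km=t^{k}\cdot t^{n-k}m_1\), and I replace \(m\) by \(m-t^{n-k}m_1\), which has the same image in \(N\) because \(n-k\ge 1\) and \(e\) is killed by \(\p^{k}\) in that summand. I would iterate this \(t\)-adically. If the iteration does not terminate, I would instead exploit the fact that the \(A\)-module \(N\), being bounded, is pure-injective, together with purity of the relevant submodule, to get the splitting directly. Since \(\p^{k}\) is finitely generated, only finitely many relations need to be killed, and I expect this to be the technical heart of the argument.
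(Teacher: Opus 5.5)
Your outer reduction is fine: the structure theorem over the Artinian principal ideal ring \(R/\p^n\cong R_\p/\p^nR_\p\) gives \(M/\p^nM\simeq N\oplus F\), and once a section \(\sigma\) of \(\pi\) exists, your argument that \(M'/\p^nM'\cong F\) is correct. The gap is the step you yourself call the technical heart, and the correction you sketch there is wrong as stated. Replacing \(m\) by \(m-t^{n-k}m_1\) changes \(\pi(m)\) by \(t^{n-k}\pi(m_1)\). This is in general nonzero once \(k\ge 2\), since \(N\) may have summands \(R/\p^j\) with \(n-k<j<n\). The reason you give (\(n-k\ge 1\) and \(\p^ke=0\)) does not control this term.

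For example, take \(R=\mathbb Z\), \(\p=(p)\), \(t=p\), \(n=3\) and \(M=\mathbb Z/p^2\oplus\mathbb Z/p^5\). Write \(M/p^3M=N\oplus F\), with \(N\) spanned by the image \(e\) of \((1,p^2)\) (which has order \(p^2\)) and \(F\) spanned by the image of \((0,1)\). Then \(m=(1,p^2)\) lifts \(e\), and \(p^2m=(0,p^4)\neq 0\) equals \(p^3m_1\) for \(m_1=(1,p)\). But \(m-pm_1=(1-p,0)\), although killed by \(p^2\), maps to \((1-p)e\neq e\).

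Your fallbacks do not close this gap. The \(t\)-adic iteration modulo growing powers of \(\p\) has no reason to converge in \(M\), which need not be \(\p\)-adically separated or complete. Pure-injectivity of \(N\) concerns splitting monomorphisms out of \(N\), not finding a section of the epimorphism \(\pi:M\to N\).

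The missing idea is that you never need to hit \(e\) exactly. Suppose that for each generator \(e\) of a summand \(R/\p^k\) of \(N\) you find \(m_e\in M\) with \(\p^km_e=0\) and \(\pi(m_e)\equiv e\) modulo \(\p N\). Then \(\sigma(e)=m_e\) is well defined, and \(\phi=\pi\sigma\) satisfies \((\phi-\mathrm{id})(\p^jN)\subset\p^{j+1}N\). Since \(\p^{n-1}N=0\), the map \(\phi-\mathrm{id}\) is nilpotent, so \(\phi\) is an automorphism and \(\sigma\phi^{-1}\) is a genuine section of \(\pi\).

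Such \(m_e\) exist without any iteration:
\begin{enumerate}
\item By finite generation of \(\p\), pick \(s\notin\p\) with \(s\p\subset tR\), so \(s^j\p^j\subset t^jR\) for all \(j\).
\item If \(m\) lifts \(e\in N\subset M/\p^nM\), then \(t^km\in\p^nM\), so \(s^nt^km=t^nm_1\) holds exactly in \(M\).
\item Thus \(m''=s^nm-t^{n-k}m_1\) is killed by \(t^k\) and is congruent to \(s^nm\) modulo \(\p M\).
\item Choose \(v\in R\) with \(vs^{n+k}\equiv 1\) modulo \(\p\). Then \(m_e=vs^km''\) is killed by \(\p^k\) and congruent to \(m\) modulo \(\p M\).
\end{enumerate}

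With this repair your route works, and it differs from the paper's, which never lifts a section of \(M\to N\). The paper splits off each cyclic \(R/\p^{m-1}\) directly: it checks that \(R/\p^{m-1}\to M/\p^{m-1}M\) is injective and uses self-injectivity of \(R/\p^{m-1}\) to obtain a retraction from \(M\). It then proves by hand that the remaining part is free modulo \(\p^n\), rather than quoting the structure theorem.
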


\begin{proof}
Note that \(R/\p^m\simeq R_\p/\p^m\), hence, as \(R_\p\) is principal, \(R/\p^m\) is self-injective (this can be seen using Baer's criterion). We will show by induction that \(M\simeq (R/\p)^{\oplus I_1}\oplus...\oplus (R/\p^{m-1})^{\oplus I_{m-1}} \oplus M^{(m)}\) where \(M^{(m)}\) satisfies the following property \((*_m)\): whenever \(x\in M^{(m)}\) is such that \(\p^{m-1} x=0\), then \(x\in\p M^{(m)}\).

Before we do so, let us remark that if \(M\) satisfies \((*_m)\), then \(M/\p^m\) as well. Indeed, first note that if \(M\) satisfies \((*_m)\), then \(M_\p\) as well\footnote{We use finite generation of \(\p\) here.}. Let \(p\) be a generator of \(\p\) in \(R_\p\). It then suffices to see that if \(x\in M_\p\) is such that \(p^{m-1}x=p^my\), then \(x\in p M_\p\). This follows by \((*_m)\) applied to \(x-py\).

The case \(m=1\) is vacuous. If \(M^{(m-1)}\) satisfies \((*_{m-1})\) but not \((*_m)\), then we can find an injective map \(R/\p^{m-1}\hookrightarrow M^{(m-1)}\) such that the composite \(R/\p^{m-1}\to M^{(m-1)} \to M^{(m-1)}/\p\) is non-zero. Consider the composite \(R/\p^{m-1}\to M^{(m-1)} \to M^{(m-1)}/\p^{m-1}\). If it is not injective, then, by property \((*_{m-1})\) for \(M^{(m-1)}/\p^{m-1}\), we learn that the composite \(R/\p^{m-1}\to M^{(m-1)} \to M^{(m-1)}/\p^{m-1}\to M^{(m-1)}/\p\) is zero, contradiction. Hence, it is injective, so, by self-injectivity of \(R/\p^{m-1}\), it splits and we can repeat this process until the remaining module satisfies \((*_m)\).

It remains to show that if \(M\) is an \(R/\p^m\)-module satisfying \((*_m)\), then it is free. Let \(I\) be a basis for the vector space \(M/\p\). Choosing lifts gives a surjective map \(f:\bigoplus_IR/\p^m \to M\). We will show by induction that \(f/\p^k\) is injective for all \(k\geq 1\). This is enough as \(f/\p^m=f\). Case \(k=1\) holds by construction. Suppose now that \(f/\p^{k-1}\) for \(k\leq m\), is injective but there are \(a_i\in R/\p^m\), indexed by a finite subset of \(I\), such that \(\sum_i a_i f(1_i)=p^kx\) for some \(x\in M\). By the inductive assumption, this implies that \(a_i\in \p^{k-1}\), so \(a_i = p^{k-1}b_i\) for some \(b_i\). But then \(\sum_i b_if(1_i) - px\) is \(p^{k-1}\)-torsion, so, by property \((*_m)\), the sum \(\sum_i b_if(1_i)\) is divisible by \(p\), so also all \(b_i\) are. This means that \(a_i \in \p^k\), so \(f/\p^k\) is injective, which finishes the proof.
\end{proof}

In the next lemma, we will denote by \(\Sigma\) a homological shift by 1.

\begin{lem} \label{lem_nul}
Let \(R\) be any ring. Let \(g:A\to B\) be a morphism in the derived category \(\mathcal D(R)\). Assume that \(A\) and \(B\) have homology concentrated in degrees \([0,1]\). If \(f\) induces the zero morphism on homology, then it can be factored through some map \(\H_0(A)\to \Sigma\H_1(B)\).

In particular, if \(\H_0(A)\) is a projective \(R\)-module, then \(g\) is zero.
\end{lem}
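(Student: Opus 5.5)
We work throughout with Postnikov truncations in \(\mathcal D(R)\) (homological grading). Note that the statement writes \(f\) where it means \(g\); we treat them as the same morphism. Since \(A\) has homology concentrated in degrees \([0,1]\), there is a cofibre sequence
\[\Sigma\H_1(A)\to A\to \H_0(A),\]
obtained from the truncation \(\tau_{\geq 1}A\simeq \Sigma\H_1(A)\). Similarly, \(B\) sits in a cofibre sequence
\[\Sigma\H_1(B)\to B\to \H_0(B).\]

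The plan is to factor \(g\) in two steps, first on the target side and then on the source side.

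\emph{Target side.} The composite \(A\to B\to \H_0(B)\) is determined by the induced map \(\H_0(A)\to \H_0(B)\). Indeed, maps from a connective object into a discrete object in degree \(0\) factor uniquely through \(\tau_{\leq 0}\), so
\[\text{Hom}(A,\H_0(B))\cong \text{Hom}(\H_0(A),\H_0(B)).\]
By hypothesis \(\H_0(g)=0\), so this composite vanishes. Hence \(g\) lifts to a map \(g':A\to \Sigma\H_1(B)\).

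\emph{Source side.} Restrict \(g'\) to \(\Sigma\H_1(A)\). A map \(\Sigma\H_1(A)\to\Sigma\H_1(B)\) is just a module map, namely the one induced on \(\H_1\). We must check that it agrees with \(\H_1(g)\), which is zero by hypothesis. This holds because the map \(\Sigma\H_1(B)\to B\) is an isomorphism on \(\H_1\), so \(\H_1(g')=\H_1(g)=0\). Therefore \(g'\) restricted to \(\Sigma\H_1(A)\) is zero, and \(g'\) factors through the cofibre, giving a map \(h:\H_0(A)\to \Sigma\H_1(B)\). This exhibits
\[g:A\to \H_0(A)\xrightarrow{h}\Sigma\H_1(B)\to B,\]
which is the claimed factorization.

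\emph{Projective case.} If \(\H_0(A)\) is projective, then
\[\text{Hom}_{\mathcal D(R)}(\H_0(A),\Sigma\H_1(B))=\text{Ext}^1_R(\H_0(A),\H_1(B))=0,\]
so \(h=0\) and therefore \(g=0\).

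The only delicate point is the identification of \(\H_1(g')\) with \(\H_1(g)\), together with the correct use of t-structure orthogonality. Both follow formally from the fact that \(\text{Hom}\) from \(\tau_{\geq n}\)-objects to \(\tau_{\leq n-1}\)-objects vanishes. I do not expect a real obstacle here. The one thing to be careful about is that the lifts are not unique, which does not matter since only existence is claimed.
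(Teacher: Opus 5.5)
Your proposal is correct and follows the same route as the paper: you lift \(g\) along \(\Sigma\H_1(B)\to B\) because \(A\to\H_0(B)\) vanishes, then factor the lift through \(\H_0(A)\) because \(\H_1(g)=0\), and in the projective case you conclude from \(\text{\normalfont Ext}^1_R(\H_0(A),\H_1(B))=0\). The only difference is that you spell out the t-structure identifications that the paper's two-line proof leaves implicit, namely \(\text{\normalfont Hom}(A,\H_0(B))\cong\text{\normalfont Hom}(\H_0(A),\H_0(B))\) and \(\H_1(g')=\H_1(g)\).
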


\begin{proof}
Using the canonical t-structure, as \(A\to \H_0(B)\) is zero, we can lift \(f\) along \(\Sigma \H_1(B) \to B\). Similarly, as \(\H_1(A)\to \H_1(B)\) is zero, \(A\to \Sigma \H_1(B)\) can be factored through \(\H_0(A)\).
\end{proof}

\begin{lem} \label{lem_der}
Let \(x\in R\) be a non-zero divisor and let \(f:M\to N\) be a map of \(R\)-modules. Then \(f\) is divisible by \(x\) if and only if \(f\otimes_R^L R/x\) is zero in \(\mathcal D(R/x)\).
\end{lem}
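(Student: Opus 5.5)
We follow the argument of Proposition~\ref{prop_thomas}. Since \(x\) is a non-zero divisor, the sequence \(0\to R\xrightarrow{x}R\to R/x\to 0\) is exact, so for every \(R\)-module \(N\) there is a distinguished triangle
\[N\xrightarrow{x}N\to N\otimes_R^L R/x\to \Sigma N\]
in \(\mathcal D(R)\). The derived tensor product is computed by the two-term complex \(N\xrightarrow{x}N\), and \(R\) is a resolution of \(R/x\) in the sense needed here.

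For the forward direction, suppose \(f=x\cdot g\). Then \(f\otimes^L_R R/x=(g\otimes^L_R R/x)\circ(x\otimes^L_R R/x)\). Multiplication by \(x\) on \(M\otimes_R^L R/x\) is \(\mathrm{id}_M\otimes^L(x\cdot)\), and \(x\cdot\) is zero on \(R/x\), so this map is zero. Alternatively, \(f\otimes^L R/x\) is obtained by applying the functor \(-\otimes^L_R R/x\colon\mathcal D(R)\to\mathcal D(R/x)\) to \(x\cdot g\), and this functor is \(R\)-linear with target category killed by \(x\). Either way, \(f\otimes^L R/x=0\).

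For the converse we argue as follows.

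First, since \(M\) and \(N\) are discrete, \(\text{Hom}_{\mathcal D(R)}(M,N)=\text{Hom}_R(M,N)\). Because \(\text{Hom}_{\mathcal D(R)}(M,\Sigma^{-1}-)=0\) on discrete modules, a lift of \(f\) along \(N\xrightarrow{x}N\) in \(\mathcal D(R)\) is the same as a module map \(g\) with \(xg=f\).

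Second, by the long exact sequence of \(\text{Hom}_{\mathcal D(R)}(M,-)\) applied to the triangle above, such a lift exists if and only if the composite \(M\xrightarrow{f}N\to N\otimes^L_R R/x\) vanishes.

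Third, this composite equals the composite
\[M\to M\otimes^L_R R/x\xrightarrow{f\otimes^L R/x}N\otimes^L_R R/x,\]
where the first map is the unit of the extension/restriction adjunction between \(\mathcal D(R)\) and \(\mathcal D(R/x)\), and \(f\otimes^L R/x\) is viewed in \(\mathcal D(R)\) via restriction. This is naturality of the unit. Under the adjunction \(\text{Hom}_{\mathcal D(R)}(M,\text{res}\,Y)\cong \text{Hom}_{\mathcal D(R/x)}(M\otimes^L_R R/x,Y)\), the composite corresponds exactly to \(f\otimes^L R/x\). Hence it vanishes if and only if \(f\otimes^L R/x=0\) in \(\mathcal D(R/x)\).

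The main care point is the third step. We must check that the map \(N\to N\otimes^L R/x\) in the triangle coincides with the adjunction unit. The clean way is to model everything with the Koszul complex \(K=(R\xrightarrow{x}R)\simeq R/x\), which is valid because \(x\) is a non-zero divisor. Then \(N\otimes K\) is the cone of \(x\) on \(N\), and the unit \(N\to N\otimes K\) is induced by \(R\to K\) in degree \(0\), which is precisely the cone inclusion. Everything else is formal.
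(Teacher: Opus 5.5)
Your proof is correct and follows the paper's argument. You identify \(N\otimes^L_R R/x\) with the cone of \(N\xrightarrow{x}N\), reduce divisibility to the vanishing of \(M\to N\to N\otimes^L_R R/x\), and use the extension/restriction adjunction to turn this into the vanishing of \(f\otimes^L_R R/x\). Your extra checks only spell out what the paper leaves implicit: that lifts in \(\mathcal D(R)\) are module maps, and that the cone inclusion is the adjunction unit via the Koszul model \(R\xrightarrow{x}R\), which is also the resolution you meant where your first paragraph says \(R\).
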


\begin{proof}
Since \(x\) is a non-zero divisor, the derived tensor product \(N\otimes_R^L R/x\) can equivalently be described as the cone of \(N\xrightarrow{x}N\) in \(\mathcal D(R)\). Hence, \(f\) is divisible by \(x\) if and only if the composite \(M\xrightarrow{f} N \to N\otimes_R^L R/x\) is zero. This is the same, by adjunction, as \(f\otimes_R^LR/x\) being zero in \(\mathcal D(R/x)\).
\end{proof}

We can now show that (2)\(\implies\)(1) and finish the proof of Theorem \ref{thm_main}.

\begin{proof}[Proof of (2)\(\implies\)(1) (Theorem \ref{thm_main})]
Let \(x\in R\). Note that, in our decomposition, \(R_2\) is a principal ring, so a quotient of a finite product of PIDs. If we replace \(R_2\) with this product, then in each of those PIDs \(x\) is either zero or a non-zero divisor. Hence, using Remark \ref{rem_P}, we can reduce to the case \(R = R_0\times R_1\). We can further assume \(R=R_1\) by Remark \ref{rem_prod}.

Now, let \(f:M\to N\) be a map of \(R\)-modules that is seemingly divisible by \(x\). We want to show it is divisible by \(x\). As explained above, we can assume \(R=R_1\), so that \(x\) is a non-zero divisor with discrete \(V(x)\). In that case, we have \(R/x\simeq \prod_{\p\in V(x)} R_\p/x\). By Lemma \ref{lem_der}, both division of \(f\) and of \(f\otimes_R (\prod_{\p\in V(x)} R_\p)\) by \(x\) is equivalent to \(f\otimes_R^LR/x\) being zero. Hence, we can replace \(R\) with \(\prod_{\p\in V(x)} R_\p\), so also just with \(R_\p\), again by Remark \ref{rem_prod}.

As now \(R\) is a principle ideal ring, we have \((x)=\p^n\) for some \(n\) and we can factor \(M\) like in Lemma \ref{lem_dec}. Since all but the last factor are \(x\)-torsion, and a seemingly divisible map must be zero on those, this reduces us to the case of \(M\) with \(M/x\) projective which follows from Lemma \ref{lem_der} combined with Lemma \ref{lem_nul}.
\end{proof}

If \(R\) is further assumed to be a domain, we can get a more explicit result.

\begin{cor} \label{cor_dom}
The following are equivalent for an integral domain \(R\):
\begin{enumerate}
\item \(\S^R=\P^R\).
\item \(R\) is a Dedekind domain.
\end{enumerate}
\end{cor}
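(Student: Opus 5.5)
We will deduce the corollary from Theorem \ref{thm_main}, using condition (\ref{pt_2}) for (2)\(\implies\)(1) and condition (\ref{pt_3}) together with Lemma \ref{lem_conv} for (1)\(\implies\)(2). The case where \(R\) is a field is trivial in both directions (a field is Dedekind by convention, and every \(f\) seemingly divisible by a non-zero \(r\) is divisible by \(r\), since \(r\) is a unit). So assume throughout that \(R\) is a domain which is not a field.

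\emph{(2)\(\implies\)(1).} Let \(R\) be a Dedekind domain. Every localization at a prime is either a field or a DVR, hence a principal ideal ring. Now fix \(x\in R\).
\begin{itemize}
\item If \(x=0\), take \(R=R_0\).
\item If \(x\neq 0\), then \(x\) is a non-zero divisor. Moreover \(V(x)\) is the finite set of maximal ideals containing \(x\), since \(R\) is Noetherian of dimension one and \(x\) lies in no minimal prime other than \(0\). A finite set of closed points is discrete, so we may take \(R=R_1\).
\end{itemize}
Thus condition (\ref{pt_2}) of Theorem \ref{thm_main} holds, and hence \(\S^R=\P^R\).

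\emph{(1)\(\implies\)(2).} Suppose \(\S^R=\P^R\). By Theorem \ref{thm_main}, every \(R_\p\) is a principal ideal ring. Since \(R\) is a domain, each \(R_\p\) is a local PID, so it is a field or a DVR. Therefore \(R\) has dimension at most one and is locally a DVR, hence integrally closed. It remains to prove that \(R\) is Noetherian, and this is the main step.

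For a one-dimensional domain that is locally Noetherian, it suffices that every non-zero element lies in only finitely many maximal ideals; this is the standard criterion that finite character plus local Noetherianity implies Noetherian. So fix a non-zero \(x\) and suppose, for contradiction, that \(V(x)\) is infinite. Then \(V(x)\) consists of maximal ideals only. Since \(\Spec R\) is quasi-compact, the infinite closed set \(V(x)\) has an accumulation point \(\p\) of \(S=V(x)\setminus\{\p\}\); the point \(\p\) lies in \(V(x)\) because \(V(x)\) is closed.

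Now apply Lemma \ref{lem_conv} to \(x\) and \(\p\). Every neighbourhood of \(\p\) contains some \(\q\in S\). Such a \(\q\) is maximal and distinct from \(\p\), so \(\p\not\subset\q\), and \(x\in\q\). The lemma therefore gives \(x=0\) in \(R_\p\). Since \(R\) is a domain, this forces \(x=0\), a contradiction. (Alternatively, condition (\ref{pt_3}) says the generic point \(0\subset\p\) is an accumulation point of \(S\subset V(x)\), which is impossible since \(0\in D(x)\).) Hence \(V(x)\) is finite, so \(R\) is Noetherian, and therefore \(R\) is a Dedekind domain.

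The main obstacle is this Noetherianity step, specifically the passage from ``locally PID with finite character'' to Noetherian. To prove it directly: take a non-zero ideal \(I\) and \(0\ne a\in I\). Only the finitely many maximal ideals \(\m_1,\dots,\m_k\) containing \(a\) matter. Choose finitely many elements of \(I\) generating \(I_{\m_j}\) for each \(j\), and adjoin \(a\). The ideal they generate agrees with \(I\) after localizing at every maximal ideal, so it equals \(I\).
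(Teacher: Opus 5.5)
Your proof is correct and follows the paper's route. Via Theorem \ref{thm_main}, both directions reduce to ``\(R\) is locally a PID and every non-zero \(x\) lies in only finitely many maximal ideals'', which is the finite-character criterion the paper cites from \cite[Theorem 3]{almDed}. The differences are minor. For (1)\(\implies\)(2) you get finiteness of \(V(x)\) from Lemma \ref{lem_conv} (or condition (3)) by a compactness argument, whereas the paper reads it off condition (2): a domain has no non-trivial idempotents, so either \(V(x)\) is discrete and quasi-compact, or \(R\) is already a PID. You also prove the finite-character criterion by hand instead of citing it.
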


\begin{proof}
If \(R\) is a domain, then condition (2) of Theorem \ref{thm_main} becomes equivalent to \(R\) being locally PID and \(V(x)\) being finite for every non-zero \(x\in R\). This is equivalent, by \cite[Theorem 3]{almDed}, to \(R\) being a Dedekind domain.
\end{proof}

\begin{rem} \label{rem_fin}
One can see from the proof of Lemma \ref{lem_loc} that among Noetherian domains Dedekind ones are already characterised by \(\S^{R}=\P^R\) restricted to maps between finitely generated modules. For non-Noetherian domains, e.g.\@ \(R=k[\sqrt[2^\infty]{x}]\), it can happen that \(\S^R=\P^R\) for finitely presented modules, but not in general.
\end{rem}

\section{Examples} \label{sec_ex}

In this short section, we provide some examples of rings \(R\) that satisfy \(\S^R=\P^R\). Corollary \ref{cor_dom} gives a clear answer in the case of domains, but Theorem \ref{thm_main} is less explicit in general.

Let us first spell out what we know about the geometry of \(\Spec R\). By condition~(3) of Theorem \ref{thm_main}, there are three types of irreducible components of \(\Spec R\): reduced points, non-reduced points and 1-dimensional components.

Our first example shows that there is no restriction on reduced points.

\begin{ex}
Let \(R\) be a von Neumann regular ring, that is a 0-dimensional reduced ring. Then, all its elements, up to units, are idempotents, so \(\S^R=\P^R\) by Remark \ref{rem_prod}. 
\end{ex}

Condition (3) gives, however, restrictions on convergence in relation to non-reduced and 1-dimensional components. We illustrate in the next example that all cases of convergence not excluded by it can indeed appear.

\begin{ex}
Let \(k\) be a field. Consider \(R\), the ring of eventually constant sequences...
\begin{enumerate}
\item valued in \(k(x)\) whose limit is a polynomial.
\item valued in \(k[x]\) whose limit is a constant.
\item valued in \(k[x]/x^n\) whose limit is a constant.
\item valued in \(k(x)[y]\) whose limit is a polynomial in \(x\) and a constant in \(y\).
\item valued in \(k(x)[y]/y^n\) whose limit is a polynomial in \(x\) and a constant in \(y\).
\end{enumerate}
The spectrum of \(R\) looks like...
\begin{enumerate}
\item infinitely many reduced points converging to the generic point of a line.
\item infinitely many lines converging to a reduced point.
\item infinitely many non-reduced points converging to a reduced point.
\item infinitely many lines converging to the generic point of a line.
\item infinitely many non-reduced points converging to the generic point of a line.
\end{enumerate}
In all cases \(R\) satisfies condition (3) of Theorem \ref{thm_main}, so \(\S^R=\P^R\). 
\end{ex}

We give the last example to show that some of the aforementioned convergences can occur in a very chaotic way.

\begin{ex}
Let \(k\) be an infinite field. Let \(\mathcal C\) denote the Cantor set. For every point \(c\in \mathcal C\) we fix a locally constant function \(f_c\) on \(\mathcal C\backslash\{c\}\) valued in \(k\) for which every level set is clopen in \(\mathcal C\). This can readily be done as \(\mathcal C\) is totally disconnected. We extend \(f_c\) to all of \(\mathcal C\) by \(f_c(c)=0\). Observe, that for every convergent sequence \(c_i\to c\) with \(c_i\in \mathcal C\backslash\{c\}\) only finitely many \(c_i\) satisfy \(f_c(c_i)=\lambda\) for any fixed \(\lambda \in k\), so \(f_c\) satisfies no polynomial equation on any open neighbourhood of \(c\).

We then consider the ring \(R\) of functions on \(\mathcal C\) that locally around every point \(d\in\mathcal C\) can be expressed as a polynomial in \(f_d\). The ring \(S\) of locally constant functions on \(\mathcal C\) includes into \(R\). In particular, \(f_c\in R\) as it is locally constant away from \(c\). The fibre of the ring map \(S\to R\) over the maximal ideal of those locally constant functions that vanish at some \(e\in\mathcal C\) is isomorphic to \(k[f_e]\). Hence, the spectrum of \(R\) looks like infinitely many lines parametrized by the Cantor set and whenever there is a non-trivial convergent sequence \(x_i\to x\) in \(\mathcal C\), then the lines over \(x_i\) converge to the generic point of the line over \(x\) (this is because for every \(f\in k[f_{x}]\) there is an open neighbourhood \(U\) of \(x\) such that \(f\) restricted to \(U\backslash\{x\}\) is invertible). Hence, \(R\) satisfies condition (3) of Theorem \ref{thm_main}, so \(\S^R=\P^R\).
\end{ex}

\printbibliography

\end{document}